\newcommand{\QED}{\hspace*{\fill}\rule{2.5mm}{2.5mm}}
\newtheorem{definition}{Definition}[section]
\newenvironment{proof}{\noindent{\bf Proof\ }}{\QED\\}
\newcommand{\mc}{\mathcal}
\newcommand{\mf}{\mathcal}
\newtheorem{proposition}{Proposition}[section]
\newtheorem{lemma}{Lemma}[section]
\newtheorem{corollary}{Corollary}[section]
\newtheorem{example}{Example}[section]
\begin{document}

\begin{center}

\vspace{0.5cm} {\large \bf ``Conditional information and definition of neighbor in categorical random fields"}\\
\vspace{1cm} Reza Hosseini, University of British Columbia,\\
333-6356 Agricultural Road,
Vancouver, BC, Canada, V6T1Z2\\
reza1317@gmail.com
\end{center}

\begin{abstract} We show that the definition of neighbor in
Markov random fields as defined by Besag (1974) when the joint
distribution of the sites is not positive is not well-defined. In
a random field with finite number of sites we study the conditions under which
giving the value at extra sites will change the belief
of an agent about one site. Also the conditions under which
the information from some sites is equivalent to giving the value
at all other sites is studied. These concepts provide an alternative to the
concept of neighbor for general case where the positivity condition of the joint does not hold.\\
\vspace{0.25cm}

\noindent Keywords: Markov random fields; Neighbor; Conditional probability; Information

\end{abstract}

\section{Introduction}

This paper studies the conditional probabilities and the definition of
neighbor in categorical random fields. These can be used to
describe spatial processes e.g. in plant ecology. We start by the
common definition of neighbor in Markov random fields and show
that the definition is not well-defined when the joint
distribution is not positive. Then we provide a framework to study
the conditional probabilities given various amount of
``information". For example, the conditional probability of one
site given some others. Since the usual definition of neighbor is
not well-defined when the ``positivity" condition of the joint
distribution does not hold, we introduce some new concepts of
``uninformative set", ``sufficient information set" and ``minimal
information set".

Suppose we have a finite random field consisting of $n$ sites.
The belief of an agent about one site  can be summarized by a
probability distribution and can be changed to a conditional distribution
by relieving new information which can be the value at some other
sites. We study when the new information changes the agent's belief
and what is ``sufficient'' information for the agent in the sense
that giving the information would be equivalent to giving the value
of all other sites. We answer some interesting questions along the
way. For example suppose agent 1 has less information than agent 2
regrading an event A and a new information is released. Now, suppose
that agent 1 does not change his belief about A. One might
conjecture that since agent 2 has more information, he as well will
not change his belief after receiving the new information. We show
this conjecture is wrong by counterexamples.

\section{Neighbor in categorical random fields}
 Suppose $(\Omega, \Sigma, P)$ is a probability space and
$\{X_i\}_{i=1}^n$ is a stochastic process. Each $X_i$ takes values
in $M_i,\;|M_i|=m_i< \infty$, and $P(x_i)>0,\;\forall x_i \in M_i$.
We use the shorthand notation:

\[P(x_i|x_{i_1}\cdots,x_{i_k})=P(X_i=x_i|X_{i_1}=x_{i_1}\cdots,X_{i_k}=x_{i_k}).\]

 \cite{besag} and  \cite{cressie},
defined the neighbor as follows:

\begin{definition} For site $i,\;i=1,\cdots,n$, site $j\neq i$ is
called a neighbor if and only if the functional form of the
$P(x_i|x_1,\cdots,x_{i-1},x_{i+1},\cdots,x_n)$ is dependent on
$x_j$.
\end{definition}

Note that in the above definition, we need to make sure that the
conditional probability is defined. The above conditional
probability is defined on
\[E_i=\{(x_1,\cdots,x_n)\;
|\;P(x_1,\cdots,x_{i-1},x_{i+1},\cdots,x_n)>0\}.\]

We show in the following example this definition is not
well-defined in general since the functional form is not unique.

\begin{example}
Let $U_1,\cdots,U_4$ denote a random sample from the uniform
distribution that take only values 0 and 1 each with probability
1/2. Define:

\begin{align*}
X_1=U_1+U_2,\\
X_2=[X_1]+U_3,\\
X_3=[X_2]+U_4,\\
\end{align*}
where $[\;]$ denotes the integer part of a real number. By the
last equality in above, $X_3$ if we know the value of $X_2$, the
value of $X_1$ will not give us extra information. Hence,
\[P(x_3|x_2,x_1)=P(x_3|x_2).\]
But since $[X_2]=[X_1]$, we also have
\[P(x_3|x_2,x_1)=P(x_3|x_1),\]
wherever the conditional probability is defined. This shows the
definition of neighbor is not well-defined in general.
\end{example}

Next we show that the positivity of the joint distribution implies
that the definition of neighbor is well-defined. By positivity of
the joint distribution, we mean
\[\forall x=(x_1,\cdots,x_n) \in
\Pi_{i=1}^n M_i,\; P(X_1=x_1,\cdots,X_n=x_n)>0.\]

\begin{lemma} Suppose $X_1,\cdots,X_n$ be a categorical random
field. If the joint distribution is strictly positive then the
concept of neighbor is well-defined for this field.
\end{lemma}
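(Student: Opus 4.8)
The plan is to show that under positivity, the functional form of the full conditional $P(x_i \mid x_1,\dots,x_{i-1},x_{i+1},\dots,x_n)$ is genuinely unique, so that the notion "depends on $x_j$" is unambiguous. The whole pathology in Example 2.1 came from the fact that the conditioning event had probability zero for some configurations, forcing us to define the conditional probability arbitrarily (or not at all) on those configurations, and two different arbitrary choices could agree on the support yet disagree about which coordinate they depend on. So the key observation I would isolate first is that positivity removes all zero-probability conditioning events: for every configuration $(x_1,\dots,x_n) \in \prod_{i=1}^n M_i$ we have $P(x_1,\dots,x_{i-1},x_{i+1},\dots,x_n) \ge P(x_1,\dots,x_n) > 0$, so the set $E_i$ defined in the excerpt equals the entire product space $\prod_{i=1}^n M_i$. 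This is the crux: the conditional probability is a bona fide function defined everywhere, with no freedom left in its specification.

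With that in hand, the argument is essentially a restatement of what "well-defined" should mean. I would let $f$ and $g$ be any two functions that both serve as the functional form of $P(x_i \mid x_1,\dots,x_{i-1},x_{i+1},\dots,x_n)$, meaning each equals that conditional probability wherever the latter is defined. Since $E_i = \prod_{i=1}^n M_i$, both $f$ and $g$ equal the conditional probability at every point of the product space, hence $f = g$ as functions on their common (full) domain. Consequently the statement "the functional form depends on $x_j$" — i.e. there exist two configurations differing only in the $j$th coordinate on which the function takes different values — is a property of the single well-defined function, not of an arbitrary representative. This establishes that whether $j$ is a neighbor of $i$ is determined unambiguously, which is exactly the claim.

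I would present it in this order: (i) invoke positivity to prove $E_i$ is the full product space and the conditional probability is everywhere defined; (ii) conclude uniqueness of the functional form from the defining identity $f(x) = P(x_i \mid x_{-i}) = g(x)$ for all $x$; (iii) deduce that neighbor-ness, being a property of this unique function, is well-defined. Step (i) is the only substantive one and it is a one-line monotonicity bound, $P(x_{-i}) = \sum_{x_i} P(x_i, x_{-i}) \ge P(x_i, x_{-i}) > 0$, summing the positive joint over the remaining value of $X_i$.

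The step I expect to carry the real weight is not any calculation but pinning down the \emph{definition} of "well-defined" so that the conclusion is not vacuous. The subtlety is that Example 2.1 did not exhibit a function failing to equal itself; it exhibited two distinct extensions (one written as a function of $x_2$, one as a function of $x_1$) agreeing on the support but disagreeing off it. So the heart of the matter is arguing that positivity leaves \emph{no points off the support}, collapsing the space of admissible functional forms to a single element. Once that is framed correctly, the proof is short; framing it incorrectly would make the lemma either false or trivial, so I would be careful to state explicitly that "functional form" means "any function agreeing with the conditional probability on $E_i$," and that uniqueness is asserted precisely because $E_i$ fills the product space.
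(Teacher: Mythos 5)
Your proposal is correct and follows essentially the same route as the paper: both arguments hinge on positivity forcing the conditioning event to have positive probability everywhere, so the conditional probability is defined on all of $\Pi_{i=1}^n M_i$ and any two functional forms must agree at every point, making dependence on each coordinate an unambiguous property of a single function. You are slightly more explicit than the paper in spelling out the bound $P(x_{-i})\ge P(x_i,x_{-i})>0$ and in pinning down what ``depends on $x_j$'' means, but the substance is the same.
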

\begin{proof}
Suppose $\mc{J}=\{j_1,\cdots,j_J\}$ and
$\mf{H}=\{h_1,\cdots,h_H\}$ are sets of neighbors of site $i$.
Hence,
\begin{align*}
P(x_i|x_1,\cdots,x_{i-1},x_{i+1},\cdots,x_n)=f(x_{j_1},\cdots,x_{j_J})\\
\mbox{also},\\
P(x_i|x_1,\cdots,x_{i-1},x_{i+1},\cdots,x_n)=g(x_{h_1},\cdots,x_{h_H})\\
\end{align*}
For some functions $f,g$. By positivity condition, the conditional
probability is defined everywhere. Hence,

\[f(x_{j_1},\cdots,x_{j_J})=g(x_{h_1},\cdots,x_{h_H}),\;\forall x=(x_1,\cdots,x_n) \in \Pi_{i=1}^n
M_i.\] Suppose $h \in \mf{H}-\mf{J}$. Then $x_h$ does not appear
on the left hand side so $g$ is not dependent on $x_h$. We
conclude $\mf{H}-\mf{J}=\emptyset$. Similarly,
$\mf{J}-\mf{H}=\emptyset$.
\end{proof}

\section{Uninformative information sets}

In the following, we consider the general case (when the
positivity condition does not hold) and define some useful
concepts which are well-defined even though the concept of
neighbor is not as well-defined as defined by \cite{besag}.

We start by some useful definitions and lemmas regarding
conditional probabilities. Consider the conditional probability
$P(A|B)$ where $A,B$ are two events and $P(B)>0$. Also consider a
third event $C$. It is interesting to study when $C$ changes (or
does not change) our beliefs about probability of $A$. Formally,
we have the following definition.

\begin{definition} We call $C$ uninformative for $A$ given $B$ if
\[P(A|B,C)=P(A|B)\;\;\mbox{or}\;\; P(B,C)=0.\] Let $UN(A|B)$ to be the set of all events
$C$ such that $P(B,C)=0$ or $P(A|B,C)=P(A|B)$.
\end{definition}
\begin{lemma}
$UN(A|B)$ is closed under countable disjoint union.
\label{lemma-uninf-union}
\end{lemma}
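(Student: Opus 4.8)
The plan is to take a countable family $\{C_n\}_{n\ge 1}$ of pairwise disjoint events, each belonging to $UN(A|B)$, set $C=\bigcup_n C_n$, and show $C\in UN(A|B)$. If $P(B,C)=0$ then $C$ lies in $UN(A|B)$ by the first clause of the definition and there is nothing to prove, so I would assume $P(B,C)>0$ and aim to establish $P(A|B,C)=P(A|B)$. The natural strategy is to reduce the statement about the union to a statement about each $C_n$ separately, exploiting countable additivity of $P$; the disjointness of the $C_n$ is exactly what makes the sets $B\cap C_n$ disjoint, so that $P(B,C)=\sum_n P(B\cap C_n)$ and likewise $P(A\cap B\cap C)=\sum_n P(A\cap B\cap C_n)$.

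The key step I would prove first is the single identity
\[
P(A\cap B\cap C_n)=P(A|B)\,P(B\cap C_n)\qquad\text{for every }n,
\]
which holds uniformly across both clauses of the membership condition $C_n\in UN(A|B)$. Indeed, if $P(B\cap C_n)=0$ then $P(A\cap B\cap C_n)\le P(B\cap C_n)=0$, so both sides vanish; and if $P(B\cap C_n)>0$ then $C_n\in UN(A|B)$ forces $P(A|B,C_n)=P(A|B)$, i.e. $P(A\cap B\cap C_n)/P(B\cap C_n)=P(A|B)$, which rearranges to the displayed identity. Writing the identity in this multiplicative, division-free form is the crucial move, because it absorbs the degenerate $P(B\cap C_n)=0$ terms into the same formula and lets me sum over $n$ without worrying about which events carry positive conditional mass.

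Having secured the per-term identity, I would sum over $n$ and invoke countable additivity on the disjoint families $\{A\cap B\cap C_n\}$ and $\{B\cap C_n\}$ to obtain
\[
P(A\cap B\cap C)=\sum_n P(A\cap B\cap C_n)=P(A|B)\sum_n P(B\cap C_n)=P(A|B)\,P(B\cap C),
\]
and then divide by $P(B,C)>0$ to conclude $P(A|B,C)=P(A|B)$, whence $C\in UN(A|B)$. I do not expect a serious obstacle here; the one point requiring care is the justification of interchanging $P(A|B)$ with the infinite sum, which is legitimate because $P(A|B)$ is a fixed constant pulled out of a convergent series of nonnegative terms, and the implicit appeal to countable additivity, which is exactly where the disjointness hypothesis is used and hence why the lemma is stated for disjoint unions rather than arbitrary ones.
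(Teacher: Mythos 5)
Your proof is correct and follows essentially the same route as the paper's: both reduce the union to a termwise identity $P(A\cap B\cap C_n)=P(A|B)\,P(B\cap C_n)$ via countable additivity over the disjoint sets $B\cap C_n$ and then divide by $P(B,C)>0$. Your division-free formulation of the per-term identity is a slightly tidier way of handling the indices with $P(B\cap C_n)=0$ than the paper's restriction to the index set $I$, but the argument is the same.
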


\begin{proof}
Suppose, $\{C_i\}_{i=1}^{\infty}$ and $C_i \cap C_j=\emptyset,\; i
\neq j$. If for all $C_i$, $P(B \cap C_i)=0$ then result is trivial.
Otherwise, Let $I=\{i|\;P(B \cap C_i) \neq 0,\; i=1,2,\cdots\}.$

\begin{align*}
P(A|B, \cup_{i=1}^{\infty} C_i)=\frac{P(A,B, \cup_{i=1}^{\infty} C_i)}{P(B,\cup_{i=1}^{\infty} C_i)}=\\
\frac{\sum_{i \in I} P(A,B,C_i)}{\sum_{i \in I}P(B,C_i)}=\frac{\sum_{i \in I} P(A|B,C_i)P(B,C_i)}{\sum_{i \in I} P(B,C_i)}=\\
\frac{\sum_{i \in I} P(A|B)P(B,C_i)}{\sum_{i \in I} P(B,C_i)}=P(A|B).\\
\end{align*}

\end{proof}

One might also conjecture that $UN(A|B)$ is closed under
intersection. We show by some counterexamples, this is not true.

\begin{example}
$\Omega=\{1,2,3,4,5,6,7,8\},\, A=\{1,2,3,4\},\, B=\Omega,\,
C_1=\{2,4,6,8\},\, C_2=\{1,3,5,8\}$ and consider a uniform
probability distribution on $\Omega$.

Then $P(A|B)=P(A)=1/2,\;P(A|B,C_1)=P(A|B,C_2)=1/2$ hence $C_1,C_2
\in UN(A|B)$. But $P(A|B,C_1,C_2)=0$ while $P(B,C_1,C_2)=1/8 \neq
0.$
\end{example}

\begin{example}
Consider the joint distribution for $(X,Y,Z)$ given in Table 1,
where every row has the same probability of 1/4. Suppose that two
agents want to predict the value of $X$. The first person does not
have any information and the second one knows that $Z=0$. Now,
assume that we provide extra information to both agents. The extra
information is the value of $Y$. For the first agent at the
beginning (before the information about $Y$ was given):
$P(X=0)=P(X=1)=1/2$. After he knows the value of $Y$:
$P(X=1|Y=0)=P(X=1|Y=1)=1/2$. Hence, the extra information does not
change the belief of the first agent about $X$. One might conjecture
that since the second agent has more information than the first and
the new information did not help the first agent update his belief,
it should not change the belief of the second agent as well. This is
not true! In fact after getting the extra information, we have the
following inequality for the second agent:
\[0=P(X=1|Z=0,Y=1) \neq P(X=1|Z=0,Y=0)=1/2.\]

{\tiny
\begin{table}[H]
  \centering  \footnotesize
  \begin{tabular}{lccc}
\toprule[1pt]
X & Y & Z\\
\midrule[1pt]
  1 & 1 & 1\\
  1&0&0\\
  0&1&0\\
  0&0&0\\
  \bottomrule[1pt]
  \end{tabular}
  \caption{The joint distribution of $X,Y,Z$}
\end{table}}

\end{example}

We to prove a seemingly trivial fact about the conditional
probabilities in the following lemma.

\begin{lemma}
Suppose $P(A|B)$ is defined. Also suppose $\{C_i\}_{i=1}^{k},\,
k=1,2,\cdots,\infty$ a (finite or countable) collection of disjoint sets such that $\cup_{i=1}^k C_i=\Omega$. Assume
\[P(B,C_i)=0\;\mbox{or}\;P(A|B,C_i)=c.\]
In other words, $P(B,C_i)$ does not depend on $C_i$. Then $C_i \in
UN(A|B)$:
\[P(A|B,C_i)=P(A|B)\;\mbox{or}\;P(B,C_i)=0.\]
\label{lemma-uninf-charac}
\end{lemma}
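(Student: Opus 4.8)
The plan is to compute $P(A|B)$ directly by decomposing the event $B$ along the partition $\{C_i\}$, and to show it must equal the common constant $c$; once that equality is in hand, membership in $UN(A|B)$ is immediate from the definition.

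First I would use the partition. Because the $C_i$ are disjoint with $\cup_i C_i = \Omega$, the events $B \cap C_i$ are disjoint with union $B$, so $P(B) = \sum_i P(B,C_i)$. Since $P(A|B)$ is assumed to be defined we have $P(B) > 0$, so the index set $I = \{i : P(B,C_i) > 0\}$ is nonempty. In particular the constant $c$ is actually attained on at least one block, so the hypothesis is not vacuous and $c$ is meaningful.

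Next I would expand the numerator the same way: $P(A,B) = \sum_i P(A,B,C_i) = \sum_{i \in I} P(A,B,C_i)$, where the terms with $i \notin I$ drop out because $P(B,C_i) = 0$ forces $P(A,B,C_i) = 0$. For each $i \in I$ the conditional $P(A|B,C_i)$ is defined and equals $c$ by hypothesis, so $P(A,B,C_i) = c\,P(B,C_i)$. Summing over $I$ gives $P(A,B) = c \sum_{i \in I} P(B,C_i) = c\,P(B)$, and dividing by $P(B) > 0$ yields $P(A|B) = c$.

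Finally the conclusion follows at once: for every $i$ we have either $P(B,C_i)=0$, or else $P(A|B,C_i) = c = P(A|B)$, which is exactly the assertion $C_i \in UN(A|B)$. I do not expect a genuine obstacle here, since the fact is essentially the law of total probability in disguise; the only points needing care are confirming that $c$ is well-defined (handled by $I \neq \emptyset$) and that the countable rearrangements are legitimate, which they are because all summands are nonnegative and the series $\sum_i P(B,C_i) = P(B)$ converges.
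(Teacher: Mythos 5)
Your proof is correct and follows essentially the same route as the paper's: both decompose $P(A,B)$ and $P(B)$ over the partition $\{C_i\}$, restrict the sums to the index set $I$ where $P(B,C_i)>0$, substitute $P(A,B,C_i)=c\,P(B,C_i)$, and conclude $P(A|B)=c$. Your added remarks that $I\neq\emptyset$ (so $c$ is actually attained) and that the nonnegative series rearrangements are legitimate are sensible points of care that the paper leaves implicit.
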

\begin{proof}
Let $I=\{i|\;1\leq i \leq k,\;P(B,C_i)>0\}$. Then we have
\[P(A|B)=\frac{\sum_{i=1}^{k}P(A,B,C_i)}{\sum_{i=1}^{k}P(B,C_i)}=\]
\[\frac{\sum_{i \in I}P(A,B,C_i)}{\sum_{i \in I}P(B,C_i)}=\]
\[\frac{\sum_{i \in I}P(A|B,C_i)P(B,C_i)}{\sum_{i \in I}P(B,C_i)}=\]
\[\frac{\sum_{i \in I}c P(B,C_i)}{\sum_{i \in I}P(B,C_i)}=c.\]
\end{proof}

\begin{corollary}
Suppose $P(x_i|x_{i_1},\cdots,x_{i_I})$ depends only on
$x_{j_1},\cdots,x_{j_J}$, where \[\{j_1,\cdots,j_J\} \subset
\{i_1,\cdots,i_I\},\] when the conditional probability,
$P(x_i|x_{i_1},\cdots,x_{i_I})$ is defined. Then
\[P(x_i|x_{i_1},\cdots,x_{i_I})=P(x_i|x_{j_1},\cdots,x_{j_J}),\] when
the conditional probability, $P(x_i|x_{i_1},\cdots,x_{i_I})$ is
defined.
\end{corollary}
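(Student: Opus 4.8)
The plan is to recognize this corollary as a direct translation of Lemma \ref{lemma-uninf-charac} into the language of categorical random fields, by choosing the right events $A$, $B$, and partition $\{C_i\}$. I would fix a value assignment $(x_{i_1},\ldots,x_{i_I})$ for which $P(x_i|x_{i_1},\ldots,x_{i_I})$ is defined, and write $\{k_1,\ldots,k_L\}=\{i_1,\ldots,i_I\}\setminus\{j_1,\ldots,j_J\}$ for the indices that are conditioned on but do not affect the conditional probability. Then I would set $A=\{X_i=x_i\}$ and $B=\{X_{j_1}=x_{j_1},\ldots,X_{j_J}=x_{j_J}\}$, and let the $C_i$ range over the events $\{X_{k_1}=c_{k_1},\ldots,X_{k_L}=c_{k_L}\}$ as $(c_{k_1},\ldots,c_{k_L})$ runs over $M_{k_1}\times\cdots\times M_{k_L}$.

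First I would check the two things the lemma requires of the partition. The events $C_i$ are pairwise disjoint and their union is $\Omega$ because they are indexed by the distinct value combinations of a fixed tuple of sites. Moreover $P(A|B)$ is defined: the specific assignment at hand singles out one particular $C^*$ among the $C_i$ with $P(B,C^*)=P(X_{i_1}=x_{i_1},\ldots,X_{i_I}=x_{i_I})>0$, so in particular $P(B)>0$.

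Next I would verify the key hypothesis of Lemma \ref{lemma-uninf-charac}, namely that $P(A|B,C_i)$ takes a single value $c$ whenever $P(B,C_i)>0$. This is exactly the assumption that $P(x_i|x_{i_1},\ldots,x_{i_I})$ depends only on $x_{j_1},\ldots,x_{j_J}$: for our fixed $B$ (that is, fixed values at the $j$ sites), every admissible choice of values at the $k$ sites gives the same conditional probability, so such a number $c$ exists and may depend on $B$ but not on $C_i$. With this hypothesis in place, Lemma \ref{lemma-uninf-charac} yields $P(A|B,C_i)=P(A|B)$ whenever $P(B,C_i)>0$. Applying this to $C^*$ gives $P(x_i|x_{i_1},\ldots,x_{i_I})=P(A|B,C^*)=P(A|B)=P(x_i|x_{j_1},\ldots,x_{j_J})$, as claimed.

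The only real obstacle is bookkeeping rather than mathematics: one must be careful that $(B,C^*)$ reconstitutes exactly the conditioning event $\{X_{i_1}=x_{i_1},\ldots,X_{i_I}=x_{i_I}\}$, so that the two left-hand sides match, and that the constant $c$ supplied to the lemma is allowed to depend on the fixed $B$. Once this dictionary between the field notation and the events $A,B,\{C_i\}$ is set up correctly, the statement follows immediately from the previous lemma.
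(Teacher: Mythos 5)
Your proof is correct and follows essentially the same route as the paper's: the paper likewise fixes the values at the $j$-sites, sets $A=\{X_i=x_i\}$ and $B=\{X_{j_1}=x_{j_1}',\dots,X_{j_J}=x_{j_J}'\}$, partitions $\Omega$ by the events determined by the values at the leftover sites $\{k_1,\dots,k_K\}=\{i_1,\dots,i_I\}\setminus\{j_1,\dots,j_J\}$, and applies Lemma \ref{lemma-uninf-charac}. Your additional bookkeeping (identifying the particular cell $C^*$ and noting that $c$ may depend on $B$) only makes explicit what the paper leaves implicit.
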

\begin{proof} Fix $(x_{j_1}',\cdots,x_{j_J}')$. Let $A=\{X_i=x_i\}$
and $B=\{X_{j_1}=x_{j_1}',\cdots,X_{j_J}=x_{j_J}'\}.$ Let \[\{k_1,\cdots,k_K\}=\{i_1,\cdots,i_I\}-\{j_1,\cdots,j_J\}.\] Consider the sets
\[C_{x_{k_1},\cdots,x_{k_K}}=\{X_{k_1}=x_{k_1},\cdots,X_{k_K}=x_{k_K}\},\;\;x_{k_l}\in M_{k_l}.\] 
These sets are disjoint, there exist finitely many of them and their union is $\Omega$. Then by the assumption
$P(A|B,C_{x_{k_1},\cdots,x_{k_K}})=c,\;$ or $P(B,C_{x_{k_1},\cdots,x_{k_K}})=0.$ Now apply Lemma
\ref{lemma-uninf-charac} to $A,B,C_{x_{k_1},\cdots,x_{k_K}}$.
\end{proof}

\section{Sufficient and minimal information sets}

This section introduces minimal and sufficient information
sets. Suppose we have $n$ sites in the random field indexed by
$1,2,\cdots,n$. We denote a site by $i$. Let
$i^c=\{1,2,\cdots,n\}-\{i\}$ be the set of all other sites other
than site $i$. Let $\mc{I}=\{i_1,\cdots,i_I\}\subset
\{1,2,\cdots,n\}$ be a collection of sites and let
\[D_{\mf{I}}=D_{i_1,\cdots,i_I}=\{(x_{i_1},\cdots,x_{i_I})|P(x_{i_1},\cdots,x_{i_I})>0 \}\]
Note that $D$ depends on the set of the subscripts and not the order
of them. Also note that $D$ is the domain where the conditional
probability given the values on the sites $\mc{I}$ is defined. By
$p(i|\mc{I})$, we mean the conditional probability of site $i$ given
$\mc{I}$ defined on $E_{i;{\mc{I}}}=M_i\times D_{\mc{I}}$. Also note
that with the positivity of the joints distributions assumption:
\[D_{\mc{I}}=D_{i_1,\cdots,i_I}=\Pi_{j=1}^I M_{i_j}.\]
Since the concept of neighbor is not well-defined in the general
case, we seek other useful definitions to study the general case.

Note that $P(i|\mc{I})$ is a function
\[P(i|\mc{I}):M_i \times D_{\mc{I}} \rightarrow [0,1],\]
\[P(x_i|x_{i_1},\cdots,x_{i_I})=P(X_i=x_i|X_{i_1}=x_{i_1},\cdots,X_{i_I}=x_{i_I}).\]

\begin{definition} Sufficient information set: Suppose $\mc{J} \subset \mc{I} \subset
\{1,2,\cdots,n\}$, $\mc{J}$ is called a sufficient information set
for $i$, given $\mc{I}$, if
\[P(i|\mf{I})=P(i|\mf{J}),\]
on $E_{i;\mc{I}}$. We denote the set of all such sets by
$SI(i,\mc{I})$.
\end{definition}

\begin{definition}
$\mc{I}\subset {1,2,\cdots,n}$ is called a minimal information set
for $i$ if $P(i|\mc{I})\neq P(i| \mc{J})$ for any
$\mc{J},\;\mc{J}\subset \mc{I},\mc{J} \neq \mc{I}$. We denote the
set of all such sets by $MI(i)$.
\end{definition}

In the following, we study the properties of $SI$ (sufficient
information) and $MI$ (minimal information) sets.

First, let us see what happens if $i \in \mc{I}$. In this case,
$\{i\} \in SI(i,\mc{I})$. Also, note that in general $\{i\} \in
MI(i)$ if $m_i>1$.  (If $m_i=1$ then we need no information to say
what the value of site $i$ is.) Also note that $\emptyset \in MI(i)$
in general.

One might conjecture a smaller a set than a given minimal
information set is a minimal set as well. This is not true! In
example 3,  $\{Y,Z\} \in MI(X)$ but $\{Y\}$ is not minimal since
$P(X|Y)=P(X|\emptyset)$.

\begin{proposition}
Suppose $\mc{J} \in SI(i,\mc{I})$ and $\mc{H}=\mc{I}-\mc{J}$. Also
assume
\[\emptyset \neq N_{h_1} \subset M_{h_1},\cdots, \emptyset \neq N_{h_H} \subset M_{h_H}\]
then
\[P(i|\mc{J})=P(i|\mc{J},x_{h_1}\in N_{h_1},\cdots,x_{h_H}\in N_{h_H}),\]
whenever, the right hand side is defined.
\end{proposition}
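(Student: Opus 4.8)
The plan is to fix arbitrary admissible values on the sites of $\mc{J}$ and reduce the claim to the closure property of $UN(A|B)$ established in Lemma \ref{lemma-uninf-union}. Concretely, fix $(x_{j_1},\cdots,x_{j_J}) \in D_{\mc{J}}$ and set $A=\{X_i=x_i\}$ and $B=\{X_{j_1}=x_{j_1},\cdots,X_{j_J}=x_{j_J}\}$. The event on which we additionally condition on the right-hand side is
\[D=\{X_{h_1}\in N_{h_1},\cdots,X_{h_H}\in N_{h_H}\}.\]

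First I would decompose $D$ into atoms according to the exact values taken on $\mc{H}$. Writing $C_{x_{h_1},\cdots,x_{h_H}}=\{X_{h_1}=x_{h_1},\cdots,X_{h_H}=x_{h_H}\}$, the event $D$ is the disjoint union of the $C_{x_{h_1},\cdots,x_{h_H}}$ over all $(x_{h_1},\cdots,x_{h_H})\in N_{h_1}\times\cdots\times N_{h_H}$; since each $M_{h_l}$ is finite, this is a finite disjoint union.

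Next I would use the sufficiency hypothesis $\mc{J}\in SI(i,\mc{I})$ to show each atom lies in $UN(A|B)$. For a given atom, either $P(B,C_{x_{h_1},\cdots,x_{h_H}})=0$, in which case it is uninformative by definition, or the corresponding joint is positive, in which case the conditional $P(A|B,C_{x_{h_1},\cdots,x_{h_H}})=P(i|\mc{I})$ is defined and equals $P(i|\mc{J})=P(A|B)$ by the assumption $\mc{J}\in SI(i,\mc{I})$. Here I use that $\mc{J}\cup\mc{H}=\mc{I}$, so that conditioning on $B$ together with one atom amounts to fixing a full value assignment on $\mc{I}$; this is the bookkeeping point I expect to be the only real care-point, rather than any analytic difficulty. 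Either way, $C_{x_{h_1},\cdots,x_{h_H}}\in UN(A|B)$.

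Finally, applying Lemma \ref{lemma-uninf-union} to this finite disjoint family gives $D\in UN(A|B)$, i.e. $P(B,D)=0$ or $P(A|B,D)=P(A|B)$. The hypothesis that the right-hand side is defined is exactly $P(B,D)>0$, so the first alternative is excluded and we obtain $P(A|B,D)=P(A|B)$. Since $(x_{j_1},\cdots,x_{j_J})\in D_{\mc{J}}$ was arbitrary, unfixing these values yields the claimed identity wherever the right-hand side is defined.
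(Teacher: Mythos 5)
Your proof is correct and follows essentially the same route as the paper's: fix the values on $\mc{J}$, decompose the event $\{x_{h_1}\in N_{h_1},\cdots,x_{h_H}\in N_{h_H}\}$ into the finitely many atoms determined by exact values on $\mc{H}$, use the sufficiency hypothesis to place each atom in $UN(A|B)$, and invoke the closure of $UN(A|B)$ under disjoint unions (Lemma \ref{lemma-uninf-union}). Your write-up is in fact slightly more careful than the paper's about which events the union is taken over and about why the definedness of the right-hand side rules out the degenerate alternative $P(B,D)=0$.
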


\begin{proof}
Fix $(x_{j_1}',\cdots,x_{j_J}')$, we want to show

\[P(x_i|x_{j_1}',\cdots,x_{j_J}',x_{h_1}\in
N_{h_1},\cdots,x_{h_H}\in
N_{h_H})=P(x_i|x_{j_1}',\cdots,x_{j_J}'),\] whenever the left hand
side is defined. But
\[P(x_i|x_{j_1}',\cdots,x_{j_J}',x_{h_1},\cdots,x_{h_H})=P(x_i|x_{j_1}',\cdots,x_{j_J}'),\]
or \[P(x_{j_1}',\cdots,x_{j_J}',x_{h_1},\cdots,x_{h_H})=0,\] since
$\mc{J}$ is sufficient. Now use the fact that $UN$ is closed under
disjoint union and take the union over

$\{X_{j_1}=x_{j_1}',\cdots,X_{j_J}=x_{j_J}',X_{h_1}=x_{h_1},\cdots,X_{h_1}=x_{h_H}\}_{x_{h_1}\in
N_{h_1},\cdots,x_{h_H}\in N_{h_H}}$

\end{proof}

\begin{lemma}
a) If $\mc{J} \in SI(i,\mc{I})$ and $\mc{J} \subset \mc{H} \subset
\mc{I}$ then $\mc{J} \in SI(i,\mc{H})$.\\
b) If $\mc{J} \in SI(i,\mc{I})$ and $\mc{J} \subset \mc{H} \subset
\mc{I}$ then $\mc{H} \in SI(i,\mc{I})$.\\
\end{lemma}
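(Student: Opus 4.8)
The plan is to prove part (a) directly by reducing it to the characterization Lemma \ref{lemma-uninf-charac}, and then to obtain part (b) almost for free by transitivity through $\mc{J}$. Throughout, write $\mc{K}=\mc{I}-\mc{H}$, and for a value profile $x_{\mc{I}}\in D_{\mc{I}}$ let $x_{\mc{H}},x_{\mc{J}},x_{\mc{K}}$ denote its restrictions to the corresponding index sets; since $\mc{J}\subset\mc{H}\subset\mc{I}$ these restrictions are consistent with one another.

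For part (a) I would fix an arbitrary $x_{\mc{H}}\in D_{\mc{H}}$ together with a value $x_i\in M_i$, and set $A=\{X_i=x_i\}$ and $B=\{X_{\mc{H}}=x_{\mc{H}}\}$. Because $P(B)=P(x_{\mc{H}})>0$, the conditional $P(A|B)=P(x_i|x_{\mc{H}})$ is defined. I would then introduce the finite partition of $\Omega$ given by the cells $C_{x_{\mc{K}}}=\{X_{\mc{K}}=x_{\mc{K}}\}$, one for each value profile on $\mc{K}$. For any cell with $P(B,C_{x_{\mc{K}}})>0$ we have $P(B,C_{x_{\mc{K}}})=P(X_{\mc{I}}=x_{\mc{I}})>0$, where $x_{\mc{I}}$ is the profile glued from $x_{\mc{H}}$ and $x_{\mc{K}}$, so $x_{\mc{I}}\in D_{\mc{I}}$ and therefore
\[P(A|B,C_{x_{\mc{K}}})=P(x_i|x_{\mc{I}})=P(x_i|x_{\mc{J}})\]
by the hypothesis $\mc{J}\in SI(i,\mc{I})$. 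The decisive observation is that, because $\mc{J}\subset\mc{H}$, the profile $x_{\mc{J}}$ is determined by the fixed $x_{\mc{H}}$ and does not vary with the cell; hence $P(x_i|x_{\mc{J}})$ equals a single constant $c$ across all active cells (and at least one is active, since their $B$-probabilities sum to $P(B)>0$). Lemma \ref{lemma-uninf-charac} then yields $P(A|B)=c$, that is $P(x_i|x_{\mc{H}})=P(x_i|x_{\mc{J}})$. As $x_{\mc{H}}$ and $x_i$ were arbitrary, this is exactly $\mc{J}\in SI(i,\mc{H})$.

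For part (b) I would argue by transitivity on $E_{i;\mc{I}}$. Take $(x_i,x_{\mc{I}})\in E_{i;\mc{I}}$, so $x_{\mc{I}}\in D_{\mc{I}}$. Marginalizing gives $P(x_{\mc{H}})\geq P(x_{\mc{I}})>0$, so $(x_i,x_{\mc{H}})\in E_{i;\mc{H}}$ and part (a) applies at this point, producing $P(x_i|x_{\mc{H}})=P(x_i|x_{\mc{J}})$. The hypothesis gives $P(x_i|x_{\mc{I}})=P(x_i|x_{\mc{J}})$ at the same point, and chaining the two equalities yields $P(x_i|x_{\mc{I}})=P(x_i|x_{\mc{H}})$ on all of $E_{i;\mc{I}}$, which is the assertion $\mc{H}\in SI(i,\mc{I})$.

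The routine content is the marginalization bookkeeping that keeps every conditioning event positive and the domains nested correctly. The one genuinely load-bearing step is the constancy of $P(x_i|x_{\mc{J}})$ across the partition cells in part (a), which is precisely where the inclusion $\mc{J}\subset\mc{H}$ (rather than merely $\mc{J}\subset\mc{I}$) is used. I expect no further obstacle, since the structure mirrors the proof of the Corollary following Lemma \ref{lemma-uninf-charac}.
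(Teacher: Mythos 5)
Your proof is correct and follows essentially the same route as the paper's: partition $\Omega$ by the value profiles on $\mathcal{K}=\mathcal{I}-\mathcal{H}$, note that every positive-probability cell yields the common value $P(x_i|x_{\mathcal{J}})$ because $\mathcal{J}\subset\mathcal{H}$ pins down $x_{\mathcal{J}}$, glue the cells via the characterization lemma to get (a), and obtain (b) by chaining with the hypothesis. Your version is if anything slightly more careful than the paper's in starting from an arbitrary point of $D_{\mathcal{H}}$ rather than one inherited from $D_{\mathcal{I}}$, but the argument is the same.
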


\begin{proof}

Let $\mc{K}=\mc{I}-\mc{H}$. $\mc{K}=\{k_1,\cdots,k_K\}.$ We want to
show that for a fixed $(x_{i_1}',\cdots, x_{i_I}')\in D_{\mc{I}},$

\[{\rm a)}\; P(x_i|x_{h_1}',\cdots, x_{h_H}')=P(x_i|x_{j_1}',\cdots, x_{j_J}'),\]
\[{\rm b)}\; P(x_i|x_{i_1}',\cdots, x_{i_I}')=P(x_i|x_{h_1}',\cdots, x_{h_H}')\]
By assumption for all $(x_{i_1},\cdots, x_{i_I})$ which their
restriction to indices in $K$ is\\  $(x_{k_K}',\cdots, x_{k_K}')$
either $P(x_{i_1},\cdots, x_{i_I})=0$ or
\[P(x_i|x_{i_1},\cdots, x_{i_I})=P(x_i|x_{j_1}',\cdots, x_{j_J}').\]
On the left hand side take the union over
$\{X_{k_1}=x_{k_1},\cdots,X_{k_K}=x_{k_K}\}_{x_{k_l}\in M_{k_l}}.$
We get

\[P(x_i|x_{h_1}',\cdots, x_{h_H}')=P(x_i|x_{j_1}',\cdots, x_{j_J}')=P(x_i|x_{i_1}',\cdots, x_{i_I}').\]
\end{proof}

To generalize the concept of neighbor, we can use the sufficient
information and minimal information sets. We call a set efficiently
sufficient for site $i$ if it is minimal and sufficient for $i$
given $i^c$. i.e. $\mc{I}$ is efficiently sufficient for $i$ if and
only if $\mc{I} \in MI(i) \cap SI(i,i^c)$. We denote the set of all
such sets $ES(i)$. If for some $i$, $ES(i)$ has only one element, we
call that element a neighbor of site $i$. Note that the definition
of neighbor coincide with the definition of neighbor by \cite{besag} and \cite{cressie} if the positivity condition
holds. In the following example we show that this is not necessary.

\begin{example}
Consider the joint distribution of $X,Y$ as given by Table 2, where
every row is equally probable. Then the positivity condition does
not hold since $P(X=1,Y=0)=0$. But for $X$, the site $Y$ is a neighbor since
$Y \in MI(X) \cap SI(X,Y)$. Also for $Y,$ $X$ is a neighbor. {\tiny
\begin{table}[H]
  \centering  \footnotesize
  \begin{tabular}{lcc}
\toprule[1pt]
X & Y \\
\midrule[1pt]
  1 & 1 \\
  0&1\\
  0&0\\
  \bottomrule[1pt]
  \end{tabular}
  \caption{The joint distribution of $X,Y$}
\end{table}}

\end{example}

\bibliographystyle{plainnat}
\bibliography{D://Research//mybibreza}
\end{document}